\documentclass{article}

\usepackage{arxiv}

\usepackage[utf8]{inputenc} 
\usepackage[T1]{fontenc}    
\usepackage{hyperref}       
\usepackage{url}            
\usepackage{booktabs}       
\usepackage{amsfonts}       
\usepackage{nicefrac}       
\usepackage{lipsum}

\usepackage{morefloats}
\usepackage{color}
\usepackage{multirow}
\usepackage{latexsym}
\usepackage{amsmath,amssymb,amsthm,graphicx}
\usepackage{dsfont}
\usepackage{enumitem}
\usepackage{hyperref}

\newtheoremstyle{thm}
{9pt}
{9pt}
{\itshape}
{}
{\bfseries}
{.}
{ }
{}
\theoremstyle{thm}

\newtheorem{theorem}{Theorem}[section]
\newtheorem{lemma}[theorem]{Lemma}

\newtheoremstyle{def}
{9pt}
{9pt}
{}
{}
{\bfseries}
{.}
{ }
{}
\theoremstyle{def}

\newtheorem{remark}[theorem]{Remark}


\renewcommand{\footnoterule}{%
	\kern -3.5pt
	\hrule width \textwidth height 1pt
	\kern 3.5pt
}

\makeatletter
\def\blfootnote{\xdef\@thefnmark{}\@footnotetext}
\makeatother

\title{Curiosities regarding waiting times in P\'{o}lya's urn model}


\author{Norbert Henze\\
Institute of Stochastics, \\
Karlsruhe Institute of Technology (KIT), \\
Englerstr. 2, D-76133 Karlsruhe. \\
\texttt{Norbert.Henze@kit.edu}\\
\And  Mark P. Holmes\\
University of Melbourne, School of Mathematics and Statistics\\
Peter Hall Building, Vic 3010, Australia\\
\texttt{holmes.p@unimelb.edu.au}\\
}

\begin{document}

\date{\today}
\maketitle

\blfootnote{ {\em MSC 2010 subject
classifications.} Primary 60E99 Secondary 00A08}
\blfootnote{
{\em Key words and phrases} P\'{o}lya's urn model; waiting time; inverse P\'{o}lya distribution}

\begin{abstract}
Consider an urn initially containing $b$ black and $w$ white balls. Select a ball at random and
observe its color. If it is black, stop. Otherwise, return the white ball together with another
white ball to the urn. Continue selecting at random, each time adding a white ball, until a black ball is selected.
Let $T_{b,w}$ denote the number of draws until this happens. Surprisingly, the expectation of
$T_{b,w}$ is infinite for the  ``fair" initial scenario $b =w=1$, but finite if $b=2$ and
$w=10^9$. In fact, $\mathbb{E}[T_{b,w}]$ is finite if and only if $b\ge 2$, and the variance
of $T_{b,w}$ is finite if and only if $b \ge 3$, regardless of the number $w$ of white balls.
These observations extend to higher moments.
\end{abstract}

\section{Introduction.}
The classical P\'{o}lya--Eggenburger urn is an elegant model in probability theory that is often presented in a first course
on martingales (typically in a graduate probability theory course).  In its simplest case, the model can be described as follows.
 Starting with $b$ black and $w$ white balls in an urn, choose a ball uniformly at random from the urn,
observe the colour, return the chosen ball to the urn together with another ball of the same colour, then repeat.
The number $B_n$ (say) of times a black ball is drawn after $n$ drawings has the well-known P\'{o}lya distribution
\begin{equation}
\mathbb{P}(B_n=k) = {n \choose k} \frac{\prod_{i=0}^{k-1} (b+i) \prod_{j=0}^{n-k} (w+j)}{\prod_{\ell=0}^{n-1} (b+w+\ell)}, \qquad k=0, \ldots, n,\label{polya_dist}
\end{equation}
where an empty product is defined to be one, see, e.g., \cite[p.\ 177]{Johnsonkotz1977}. It is easy to see that
 the proportion $X_n = (b+B_n)/(b+w+n)$ of black balls at time $n$
is a bounded martingale (with respect to the natural filtration), with $B_0 = b/(b+w)$, and thus $X_n$ converges
almost surely to a random variable $X$.  Here, $X$ has a beta $\beta(b,w)$ distribution, see for example \cite[Theorem 2.1]{Pemantle2007}.  In the special case $b=w=1$, equation \eqref{polya_dist} reduces to the discrete uniform distribution $\mathbb{P}(B_n=k)=1/(n+1)$, and the limit $X$ has a standard uniform distribution.

For later purposes, it will be convenient to regard the distribution of $B_n$ as a special case of a Beta-binomial distribution,
see, e.g., \cite[p.242]{Johnsonkotz1992}. The latter distribution
originates as follows: Let $P$ have a Beta $\beta(u,v)$-distribution, where $u,v>0$. Suppose that, conditionally on $P=p$, the random variable $M$ has a
binomial distribution Bin$(n,p)$. Then, for $k \in \{0,1,\ldots,n\}$, we have
\begin{eqnarray}\label{betabin0}
\mathbb{P}(M=k) & = & \int_0^1 {n \choose k} p^k(1-p)^{n-k} \cdot \frac{1}{{\rm B}(u,v)} p^{u-1}(1-p)^{v-1} \, {\rm d}p\\ \label{betabin}
& = & {n \choose k} \, \frac{{\rm B}(u+k, v+n-k)}{{\rm B}(u,v)},
\end{eqnarray}
where B$(\cdot,\cdot)$ is the Beta function. The distribution of $M$ is called the Beta-binomial distribution with  parameters $n$, $u$ and $v$.
By using the relation B$(u,v) = \Gamma(u)\Gamma(v)/\Gamma(u+v)$, where $\Gamma(\cdot)$ is the Gamma function, we see that the distribution
of $B_n$ is obtained from \eqref{betabin} by putting $u=b$ and $w=v$.

Inverse P\'{o}lya distributions originate if one asks for the number of drawings needed to observe a specified number of black balls
under the above or more general replacement schedules, see, e.g., \cite[p.\ 192]{Johnsonkotz1977}. The paper \cite{Inoue2002}
considers waiting times for the first occurrence of a specified pattern in P\'{o}lya's urn scheme.  A special case is the waiting time until
the first occurrence of a black ball, which we will focus on in this note. For recent work on inverse P\"olya distributions, see, e.g.,
 \cite{Charala2012}, \cite{Garg2019}, and \cite{Makri2007}.
In what follows, we consider
some curiosities concerning the (random) time until we first draw a black ball, denoted by $T_{w,b}$, 	that evidently have not been
highlighted before.

\section{One black ball}
We first consider the standard ``fair" case where the urn contains one black and one white ball at the outset. We then have
\[
\mathbb{P}(T_{1,1} > n) = \frac{1}{2} \cdot \frac{2}{3} \cdot \ldots \cdot  \frac{n-1}{n} \cdot \frac{n}{n+1}  = \frac{1}{n+1}
\]
and thus $\mathbb{P}(T_{1,1}<\infty) =1$. Hence, the black ball will be drawn with probability one in finite time. However,
since $\sum_{n=0}^\infty \mathbb{P}(T_{1,1} > n) = \infty$, {\em the expectation of $T_{1,1}$ does not exist}.

In view of $\mathbb{P}(T_{1,1}=j)=\mathbb{P}(T_{1,1}>j-1)-\mathbb{P}(T_{1,1}>j)=1/(j(j+1))$, notice that the conditional expectation
of $T_{1,1}$ given $T_{1,1} \le k$ is
\[
\mathbb{E}[T_{1,1}|T_{1,1} \le k] = \frac{1}{\mathbb{P}(T_{1,1}\le k)} \sum_{j=1}^k j \; \mathbb{P}(T_{1,1} =j) = \frac{(k+1)}{k} \sum_{j=1}^k \frac{1}{j+1}.
\]
Using $\sum_{j=1}^n \frac{1}{j} = \log n + \gamma + o(1)$, where $\gamma = 0.57721\ldots $ is the Euler--Mascheroni constant, it follows that
\[
\mathbb{E}[T_{1,1}|T_{1,1} \le k] =  \log k + \gamma -1 +  o(1) \qquad \text{as } k \to \infty.
\]
We incidentially note that the probability that $T_{1,1}$ takes an odd value equals $\log 2$, since
\begin{eqnarray*}
\sum_{\ell  = 0}^\infty \mathbb{P}(T_{1,1} = 2\ell +1) & = & \sum_{\ell =0}^\infty \frac{1}{(2\ell +1)(2\ell +2)} = \sum_{\ell =0}^\infty \left(\frac{1}{2\ell +1} - \frac{1}{2\ell + 2}\right)\\
& = & \sum_{j=1}^\infty \frac{(-1)^{j-1}}{j}.
\end{eqnarray*}
Continue to set $b=1$, but now allow $w$ to be arbitrarily large. Since
\[
\mathbb{P}(T_{1,w} > n) = \frac{w}{w+1} \cdot \frac{w+1}{w+2} \cdot \ldots \cdot \frac{w+n-2}{w+n-1} \cdot \frac{w+n-1}{w+n} = \frac{w}{w+n},
\]
it follows that $\mathbb{P}(T_{1,w} < \infty) =1$, regardless of the number of white balls. If, for example, $w = 10^9$, drawing the only black ball
seems to be like finding a needle in a haystack, but you have time beyond all limits, and the situation of having   one black and $10^9$ white balls in the
urn could have happended in the course of the stochastic process involving over time under the initial scenario $b=w=1$ after $10^9-1$ draws.

\bigskip

\section{A second black ball works wonders}
Suppose now that at the beginning there are $b=2$ black and $w$ white balls in the urn. We now have
\[
\mathbb{P}(T_{2,w} > n) = \frac{w}{w+2} \cdot \frac{w+1}{w+3} \cdot \frac{w+2}{w+4} \cdot \ldots \cdot \frac{w+n-1}{w+n+1} = \frac{w(w+1)}{(w+n)(w+n+1)}.
\]
Since $\sum_{n=1}^\infty \mathbb{P}(T_{2,w} >n) < \infty$, we do not only have $\mathbb{P}(T_{2,w} < \infty) =1$, but, in addition,
{\em the expectation of $T_{2,w}$ is finite, irrespective of the number of white balls}. More specifically, we have
\[
\mathbb{E}[T_{2,w}] = \sum_{k=0}^\infty \mathbb{P}(T_{2,w} > k) = w(w+1) \sum_{k=0}^\infty \frac{1}{(w+k)(w+k+1)} = w+1.
\]
Here, the last equality follows because the series is telescoping.

	\begin{remark}
	Starting from $b=1, w=1$, we may continue observing P\'olya's urn after $T_{1,1}$ until the time $T^{(2)}_{1,1}$ at which we draw a second black ball.  At the time $T_{1,1}$ that we first draw a black ball, we return it and add another so there are then $2$ black balls and $T_{1,1}$ white balls.  Since $\mathbb{E}[T^{(2)}_{1,1}-T_{1,1} \, |\, T_{1,1}=w]=\mathbb{E}[T_{2,w}]=w+1$ we know that this expectation is finite for every $w$.  We can interpret this as $\mathbb{E}[T^{(2)}_{1,1}-T_{1,1} \, |\, T_{1,1}]=T_{1,1}+1$, or ``given the value of $T_{1,1}$, the expected additional time required to draw a second black ball is finite'' (a.s.).  Nevertheless $\mathbb{E}[T^{(2)}_{1,1}-T_{1,1}]=\mathbb{E}[T_{1,1}+1]=\infty$.
	\end{remark}

\bigskip

\section{The general case} We now assume that the initial  configuration is $b$ black and $w$ white balls. The event that each of the first
$n$ draws yields a white ball has probability
\begin{eqnarray*}
\mathbb{P}(T_{b,w} > n ) & = & \prod_{i=0}^{n-1} \frac{w+i}{b+w+i}\\
& = & \frac{(b+w-1)!}{(w-1)!} \cdot \frac{(w-1+n)!}{(b+w-1+n)!}, \qquad n \ge 1.
\end{eqnarray*}
The first ratio does not depend on $n$, and the second is equal to
\begin{equation}\label{ratioproduct}
\frac{1}{(w+n)\cdot \ldots \cdot (b+w-1+n)}.
\end{equation}
It immediately follows that $\mathbb{P}(T_{b,w}<\infty)=1$, but we can infer more from \eqref{ratioproduct}.
To this end, notice that this expression is bounded from below by $(b+w+n)^{-b}$ and from above by $n^{-b}$,
which, for each integer $r$, shows that
\begin{eqnarray*}
\mathbb{E} \big{[} T^r_{b,w}\big{]} & = & \sum_{n=1}^\infty n^r \mathbb{P}(T_{b,w} =n)\\
& = & \sum_{n=1}^\infty n^r \frac{(b+w-1)!}{(w-1)!} \frac{(w+n-2)!}{(b+w+n-2)!} \frac{b}{b+w+n-1}\\
& = & \sum_{n=1}^\infty n^r O(n^{-(b+1)}).
\end{eqnarray*}

Hence, $\mathbb{E} \big{[} T_{b,w}^r \big{]} < \infty$ if and only if $b>r$.
Surprisingly, this moment condition does not depend on the number $w$ of white balls. In particular,
the variance of $T_{b,w}$ exists if and only if there are at least 3 black balls in the urn at the beginning.
In the case $b=3$, straightforward calculations involving telescoping series yield $\mathbb{E}[T_{3,w}] = (w+2)/2$,
and -- using the fact that $\mathbb{E}[L^2] = \sum_{n=0}^\infty (2n+1)\mathbb{P}(L > n)$ for a nonnegative integer-valued
random variable $L$ -- we have $\mathbb{E}[T_{3,w}^2] = (w+2)(2w+1)/2$,  and thus the variance is $\mathbb{V}(T_{3,w}) = 3w(w+2)/4$.

\bigskip

\begin{remark}
In \cite{internet1} one finds the general formula
\begin{equation}\label{expecttbw}
\mathbb{E}[T_{b,w}] =  \frac{b+w-1}{b-1}
\end{equation}
if $b \ge 2$, which was obtained from a hypergeometric series. As remarked in \cite{internet2}, \eqref{expecttbw} follows readily from
\eqref{betabin0}, since, conditionally on $P=p$, drawings are according to an independent and identically distributed  Bernoulli sequence with probability of success given by $p$, where success
means drawing a black ball. Since, conditionally on $P=p$, the distribution of $T_{b,w}$ is geometric, we have $\mathbb{E}[T_{b,w}|P=p] = 1/p$ and thus
\begin{eqnarray*}
\mathbb{E}[T_{b,w}] & = & \int_0^1 \mathbb{E}[T_{b,w}|P=p] \, \frac{1}{{\rm B}(b,w)} \, p^{b-1}(1-p)^{w-1}\, {\rm d}p   = \frac{{\rm B}(b-1,w)}{{\rm B}(b,w)} \\
& = & \frac{b+w-1}{b-1}.
\end{eqnarray*}
From \eqref{betabin0} and the fact that $\mathbb{V}(T_{b,w})= \mathbb{E}[\mathbb{V}(T_{b,w}|P)] + \mathbb{V}(\mathbb{E}[T_{b,w}|P])$, we may also
obtain a general formula for the variance of $T_{b,w}$ if $b \ge 3$. Since the conditional variance of $T_{b,w}$ given $P=p$ is the variance of
a geometric distribution with parameter $p$ and thus equal to $(1-p)/p^2$, straightforward algebra gives
\[
\mathbb{E}[\mathbb{V}(T_{b,w}|P)] = \int_0^1 \frac{1-p}{p^2} \, \frac{1}{{\rm B}(b,w)} \, p^{b-1}(1-p)^{w-1} \, {\rm d} p = \frac{w(b+w-1)}{(b-1)(b-2)}.
\]
Furthermore, $\mathbb{E}[T_{b,w}|P] = 1/P$, and thus some algebra yields
\[
\mathbb{V}(\mathbb{E}[T_{b,w}|P]) = \frac{w(b+w-1)}{(b-1)^2(b-2)}.
\]
Summing up, we obtain
\[
\mathbb{V}(T_{b,w}) = \frac{bw(b+w-1)}{(b-1)^2(b-2)}.
\]
Notice that, in view of $\mathbb{E}[T_{b,w}^\ell] = \mathbb{E}\big{[}\mathbb{E}[T_{b,w}^\ell|P]\big{]}$, one may fairly easily even
obtain closed-form expressions for higher moments of $T_{b,w}$.
\end{remark}

\bigskip

\section{A general replacement scheme}
Suppose now that, if a white ball shows up at time $k$, we return this ball and additionally $a_k$ white balls, where $a_k\ge 1$.
Notice that this flexible model includes the special case $a_k=1$ that has been considered so far, but also the case that a constant number
larger than one of white balls is returned to the urn together with the chosen ball. The following result gives a necessary and sufficient condition on the sequence $(a_k)$ for the probability that a black ball shows up in finite time.

\begin{lemma}
Let $s_k = a_1+ \ldots + a_k$, $k \ge 1$. We then have
\[
\mathbb{P}(T_{b,w} < \infty) = 1  \Longleftrightarrow \sum_{j=1}^\infty \frac{1}{s_j} = \infty .
\]
\end{lemma}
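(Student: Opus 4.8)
The plan is to compute the survival probability $\mathbb{P}(T_{b,w} > n)$ explicitly as a finite product, observe that $\mathbb{P}(T_{b,w} < \infty) = 1$ is equivalent to this product tending to zero, and then convert that into the divergence of a series via the classical criterion for infinite products, closing with an elementary comparison.

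First I would track the composition of the urn along the event that the first $n$ draws are all white. Setting $s_0 := 0$, after $k-1$ consecutive white draws the urn holds $b$ black balls and $w + s_{k-1}$ white balls, so the conditional probability that the $k$-th draw is again white is $(w+s_{k-1})/(b+w+s_{k-1})$. Multiplying these conditional probabilities gives
\[
\mathbb{P}(T_{b,w} > n) = \prod_{k=1}^n \frac{w+s_{k-1}}{b+w+s_{k-1}} = \prod_{k=1}^n \Bigl( 1 - \frac{b}{b+w+s_{k-1}} \Bigr),
\]
which reduces to the formula of Section 4 when $a_k \equiv 1$. Since $T_{b,w}$ takes values in $\mathbb{N} \cup \{\infty\}$, we have $\mathbb{P}(T_{b,w} < \infty) = 1$ if and only if $\mathbb{P}(T_{b,w} > n) \to 0$, i.e. if and only if the corresponding infinite product equals $0$.

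Next I would apply the standard criterion for infinite products with factors in $(0,1)$: with $x_k := b/(b+w+s_{k-1})$, the inequality $1 - x \le \mathrm{e}^{-x}$ shows that $\sum_k x_k = \infty$ forces $\prod_k(1-x_k) = 0$, while $-\log(1-x) \le 2x$ for $x \le 1/2$ (applicable here because $x_k \to 0$, as $s_{k-1} \ge k-1 \to \infty$) shows that $\sum_k x_k < \infty$ keeps the product strictly positive. Hence the product vanishes if and only if $\sum_{k=1}^\infty x_k = \infty$, and since $b > 0$ is fixed this is equivalent to $\sum_{k=1}^\infty (b+w+s_{k-1})^{-1} = \infty$.

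Finally I would re-index by $j = k-1$, drop the finite $j=0$ term, and compare with $\sum_j s_j^{-1}$. Because $a_k \ge 1$ yields $s_j \ge j \ge 1$ for $j \ge 1$, we have $s_j \le b+w+s_j \le (1+b+w)\,s_j$, so that
\[
\frac{1}{(1+b+w)\,s_j} \le \frac{1}{b+w+s_j} \le \frac{1}{s_j},
\]
and the two series converge or diverge together. Chaining the equivalences gives the claim. The one place deserving care is this last comparison: since $b+w$ is a constant that may be large, one must use $s_j \ge 1$ to keep the ratio $(b+w+s_j)/s_j$ bounded rather than assuming $s_j$ is already large. Apart from that the argument is routine, the conceptual heart being the product-to-series translation in the preceding step.
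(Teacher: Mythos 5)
Your proof is correct and follows essentially the same route as the paper: the explicit product formula for $\mathbb{P}(T_{b,w}>n)$, elementary logarithm/exponential bounds to translate the vanishing of the product into divergence of $\sum_j (b+w+s_j)^{-1}$ (the paper uses $1-1/t\le \log t\le t-1$ where you use $1-x\le \mathrm{e}^{-x}$ and $-\log(1-x)\le 2x$), and then comparison with $\sum_j s_j^{-1}$. If anything, you are slightly more careful than the paper at the final step, where you justify explicitly via $s_j\le b+w+s_j\le (1+b+w)s_j$ the equivalence that the paper's proof leaves implicit.
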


\begin{proof}
Putting $s_0=0$, we have
\[
\mathbb{P}(T_{b,w} > n) = \prod_{j=0}^{n-1} \frac{w+s_j}{b+w+s_j}.
\]
Using the inequalities $1-1/t \le \log t \le t-1$, $t>0$, straightforward calculations yield
\[
-b \sum_{j=0}^{n-1} \frac{1}{w+s_j} \le \log \mathbb{P}(T_{b,w} >n) \le - b \sum_{j=0}^{n-1} \frac{1}{b+w+s_j}.
\]
Hence $\log \mathbb{P}(T_{b,w} >n) \to - \infty$ as $n \to \infty$ if and only if the series $\sum_{j=0}^\infty 1/s_j$ diverges,
and the assertion follows.
\end{proof}

From this result, it follows that $\mathbb{P}(T_{b,w} < \infty) =1$ even if $b=1$, $w$ is arbitrarily large, and a fixed huge number
of additional white balls is added to the urn after each draw of a white ball, but not if at the $k$th time we select a white ball we return it and add $k$ extra white balls for example.


\begin{thebibliography}{1}


\bibitem{Charala2012}
Charalambides, Ch. A. (2012). A $q$-P\'{o}lya urn model and the $q$-P\'{o}lya and inverse $q$-P\'{o}lya distributions.
\textit{J.\ Statist.\ Plann. \ Infer.} 142: 276-288.


\bibitem{Johnsonkotz1977}
Johnson, N. L., Kotz, S. (1977). \textit{Urn Models and Their Application}.
New York, NY: Wiley.


\bibitem{Johnsonkotz1992}
Johnson, N. L., Kotz, S., Kemp, A.  (1992). \textit{Univariate discrete distributions}. Second edition.
New York, NY: Wiley.


\bibitem{Garg2019}
Garg, T., Agrawal, P. N., Kajla, A. (2019). Jain--Durrmeyer operators involving inverse P\'{o}lya--Eggenberger distributions.
\textit{Proc. Natl. Acad. Sci. India, Sect. A Phys. Sci.} 89(3):547--557.


\bibitem{Inoue2002}
Inoue, K., Aki, S. (2002),
Generalized waiting time problems associated with patterns in P\'olya's urn scheme.
\textit{Ann. Inst. Statist. Mathem.} 54(3):681-688.



\bibitem{internet1}
https://math.stackexchange.com/questions/2560846/polya-urn-mean-number-of-draws-to-get-a-specific-sequence-of-colors



\bibitem{internet2}
https://mathoverflow.net/questions/289247/polya-urn-mean-number-of-draws-to-get-a-specific-sequence-of-colors


\bibitem{Makri2007}
Makri, F., Philippou, A. N., Psillakis, Z. M. (2007).
P\'{o}lya, inverse P\'{o}lya, and circular P\'{o}lya distributions  of order $k$ for $\ell$-overlapping success runs.
\textit{Commun. Statist. A -- Th. Meth.} 36:657-688.

\bibitem{Pemantle2007}
Pemantle, R.   (2007).
A survey of random processes with reinforcement.
\textit{ Probab. Surveys.}  4:1-79.

\end{thebibliography}
\end{document}